\theoremstyle{plain}
\newtheorem{thm}{Theorem}
\newtheorem{cor}{Corollary}
\newtheorem{prop}{Proposition}
\theoremstyle{definition}
\newtheorem{exam}{Example}
\newtheorem*{defi}{Definition}
\theoremstyle{remark}
\newcommand\RR{\mathbb{R}}
\newcommand\Def[1]{{\bf #1}}
\author{Matthias Beck\affiliationmark{1}%\thanks{I am fully supported.}
  \and Sampada Kolhatkar\affiliationmark{2}}%\thanks{And she is, too!}}
\title[Bivariate Chromatic Polynomials of Mixed Graphs]{Bivariate Chromatic Polynomials of Mixed Graphs}
\affiliation{
  % one line per affiliation, no postal codes, grant numbers or similar
Department of Mathematics, San Francisco State University, U.S.A.\\
 Institut f\"ur Mathematik, Freie Universit\"at Berlin, Germany}
\keywords{Mixed graph, bivariate chromatic polynomial, bivariate order polynomial, poset, acyclic orientation, order preserving map, combinatorial reciprocity theorem}
\begin{document}
% This is only used if you are compiling for a volume before vol 25
% \publicationdetails{VOL}{2015}{ISS}{NUM}{SUBM}
% This is the new form of collecting the data, starting with vol 25
\publicationdata{vol. 25:2 }
{2023}
{2}
{10.46298/dmtcs.9595}
{2022-05-23; 2022-05-23; 2022-11-03}{2023-06-01}

\maketitle
\begin{abstract}
The bivariate chromatic polynomial $\chi_G(x,y)$ of a graph $G = (V, E)$,
introduced by Dohmen--P\"{o}nitz--Tittmann (2003), counts all $x$-colorings of $G$ such that
adjacent vertices get different colors if they are $\le y$. We extend this notion to mixed
graphs, which have both directed and undirected edges. Our main result is a decomposition
formula which expresses $\chi_G(x,y)$ as a sum of bivariate order polynomials
(Beck--Farahmand--Karunaratne--Zuniga Ruiz 2020), and a combinatorial reciprocity theorem
for $\chi_G(x,y)$.
\end{abstract}

\section{Introduction}
\label{sec:in}

Graph coloring problems are ubiquitous in many areas within and outside of mathematics. Our interest is in enumerating proper colorings for graphs, directed graphs, and mixed graphs (and in the latter two instances, there are two definitions of the notion of a coloring being proper).

The motivation of our study is the \Def{bivariate chromatic polynomial} $\chi_G(x,y)$
of a graph $G = (V, E)$, first introduced in \cite{dohmenponitztittmann} and
defined as the counting function of colorings $c : V \to [x] := \left\{ 1, 2, \dots, x \right\}$ that satisfy for any edge $vw \in E$
\[
  c(v) \ne c(w) \quad \text{ or } \quad c(v) = c(w) > y \, .
\]
The usual univariate chromatic polynomial of $G$ can be recovered as the special evaluation $\chi_G(x,x)$.
Dohmen, P\"onitz, and Tittmann provided basic properties of $\chi_G(x,y)$
in~\cite{dohmenponitztittmann}, including polynomiality and special evaluations which yield the matching and independence polynomials of $G$.
Subsequent results include a deletion--contraction formula and applications to Fibonacci-sequence identities~\cite{hillarwindfeldt}, common generalizations of $\chi_G(x,y)$ and the Tutte polynomial~\cite{averbouchgodlinmakowsky}, and closed formulas for paths and cycles~\cite{dohmenbivariatepathsandcycles}.

We initiate the study of a directed/mixed version of this bivariate chromatic polynomial. Since directed graphs form a subset of mixed graphs, we may restrict our definitions to a \Def{mixed graph} $G = (V, E, A)$ consisting, as usual, of a set $V$ of vertices, a set $E$ of (undirected) edges, and a set $A$ of arcs (directed edges).
Coloring problems in mixed graphs have various applications, for example in scheduling problems in which one has both disjunctive and precedence constraints (see, e.g., \cite{furmanczykkosowkiries,HaKuWe,SoTaWe}).

\begin{defi}For a mixed graph $G = \left(V, E, A\right)$, the \Def{bivariate
chromatic polynomial} $\chi_G(x,y)$, where $1 \leq y \leq x$, is defined as  the counting function of colorings $c : V \longrightarrow \left[x\right]$ that satisfies
for every edge $uv \in E$
\begin{align}
c(u) \neq c(v) \quad &\text{ or } \quad c(u)>y \label{cond:bcpomg1}
\end{align}
and for every arc $\overrightarrow{uv} \in A$
\begin{align}
c(u) < c(v) \quad &\text{ or } \quad c(u) > y \, . \label{cond:bcpomg2}
\end{align}
\end{defi}
It is not obvious that this counting function is a polynomial in $x$ and $y$; we will prove
this as a by-product of Theorem~\ref{mainthm2} below. 
Naturally, for a mixed graph with $A = \emptyset$, we recover the Dohmen--P\"{o}nitz--Tittmann chromatic polynomial above.
On the other hand, $\chi_G(x,x)$ is the univariate chromatic polynomial of the mixed graph
$G$;
% Sotskov--Tanaev 
\cite{sotskovtanaev}  showed that this function (if not identically zero) is indeed a polynomial in $x$ of degree $|V|$ and computed the two leading coefficients.
We note that $\chi_G(x,x)$ is sometimes called the \emph{strong} chromatic polynomial of
$G$, because there is an alternative notion of a proper coloring of $G$ in which the
inequality in~\eqref{cond:bcpomg2} is replaced by $\le$ (see, e.g.,~\cite{kotekmakowskyzilber}).

\begin{exam} Consider the directed graph  with  $V = \left\{ u,v\right\},$ $E = \emptyset$,  $A = \left\{\overrightarrow{uv}\right\}$. A quick case analysis yields the bivariate chromatic polynomial 
\begin{align*}
\chi_{G}(x,y) &= {x \choose 2} + y(x-y) + {x-y+1 \choose 2} = \frac{1}{2} \left( 2x^2 - y^2 -y \right).
\end{align*}
\end{exam}

\begin{figure}
\centering
\includegraphics[width=0.25\textwidth]{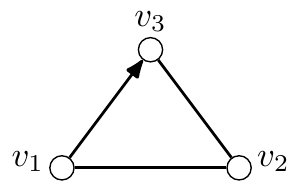}
\caption{A mixed graph $G$.}
\label{fig:eg2fig1}
\end{figure}
\begin{exam}
The mixed graph in Figure~\ref{fig:eg2fig1} has bivariate chromatic polynomial
\[
\chi_G(x,y) = x^3 - \frac{1}{2}xy^2 - \frac{5}{2}xy+y^2+y
\]
as we will compute in Section~\ref{sec:decomp}.
\end{exam}

After providing some background in Section~\ref{sec:background}, we derive in
Section~\ref{sec:delcontr} deletion--contrac\-tion formulas for $\chi_{G}(x,y)$. Our main
results are in Section~\ref{sec:decomp}, where we decompose $\chi_{G}(x,y)$ into bivariate
order polynomials (originally introduced in~\cite{Beckbop} and loosely connected with the marked poset concepts of~\cite{ardilabliemsalazar}), and Section~\ref{sec:bcpmgReciprocity}, where we give a combinatorial reciprocity theorem interpreting $\chi_{G}(-x,-y)$.
Our results recover known theorems for undirected graphs (the case $A =
\emptyset$)~\cite{Beckbop}. Bivariate order polynomials are the natural counterparts of bivariate chromatic polynomials in the theory of posets. Our work reveals that bivariate order polynomials are as helpful in the setting of mixed graphs as they are for undirected graphs.

%-----------------------------------------------------------------------
% You may scarsely use \clearpage to advance to a new page if this
% improves the readability of the document structure
%\clearpage
%-----------------------------------------------------------------------
%-----------------------------------------------------------------------
\section{Chromatic and (Bicolored) Order Polynomials}\label{sec:background}

For a finite poset $(P, \preceq)$, % Stanley~
\cite{stanleychromaticlike} (see
also~\cite[Chapter 3]{stanleyec1}) famously introduced the ``chromatic-like'' \Def{order
polynomial} $\Omega_P(x)$ counting all \Def{order preserving maps} $\varphi : P \to [x]$, that is, 
\[
  a \preceq b \qquad \Longrightarrow \qquad \varphi(a) \le \varphi(b) \, .
\]
Here we think of $[x]$ as a chain with $x$ elements, and so $\le$ denotes the usual order in $\RR$.
The connection to chromatic polynomials is best exhibited through a variant of
$\Omega_P(x)$, namely the number $\Omega_P^\circ(x)$ of all \Def{strictly order preserving
maps} $\varphi : P \to [x]$:
\[
  a \prec b \qquad \Longrightarrow \qquad \varphi(a) < \varphi(b) \, .
\]
When thinking of $P$ as an acyclic directed graph, it is a short step interpreting $\Omega_P^\circ(x)$ as a directed version of the chromatic polynomial.
Along the same lines, one can write the chromatic polynomial of a given graph $G$ as
\begin{equation}\label{eq:chiintoomegas}
  \chi_G(x) \ = \sum_{ \sigma \text{ acyclic orientation of } G } \Omega_\sigma^\circ(x) \, .
\end{equation}
Stanley's two main initial results on order polynomials were
\begin{itemize}
\item decomposition formulas for $\Omega_P(x)$ and $\Omega_P^\circ(x)$ in terms of certain permutation statistics for linear extensions of~$P$, from which polynomiality of
$\Omega_P(x)$ and $\Omega_P^\circ(x)$ also follows;
\item the combinatorial reciprocity theorem
$
  (-1)^{ |P| } \, \Omega_P(-x) \ = \ \Omega_P^\circ(x) \, .
$
\end{itemize}
The latter, combined with~\eqref{eq:chiintoomegas}, gives in turn rise to
\begin{itemize}
\item Stanley's reciprocity theorem for chromatic polynomials: $(-1)^{ |V| } \, \chi_G(-x)$ equals the number of pairs of an $x$-coloring and a compatible acyclic
orientation~\cite{stanleyacyclic}.
\end{itemize}
Reciprocity theorems for the two versions of univariate chromatic polynomials of mixed
graphs were proved in~\cite{Golomb,weakmixed}.

It is natural to extend order polynomials and the three bullet points above to
the bivariate chromatic setting, and this was done for (undirected) graphs in~\cite{Beckbop}.
As we will need it below, we recall the setup here.
The finite poset  $\left(P, \preceq\right)$ is called a  \Def{bicolored poset} if $P$ can be viewed as the disjoint union of sets $C$ and $S$, %$P = C \sqcup S$, 
whose elements are called \Def{celeste} and \Def{silver}, respectively.
This color labeling of the elements of the bicolored poset is captured in the order
preserving maps by introducing another variable, as follows. 
A map $\varphi : P \longrightarrow \left[x\right]$ is called an \Def{order preserving $(x,y)$-map} if
\[
  a \preceq b \ \Longrightarrow \ \varphi(a) \le \varphi(b) \ \ \text{ for all } a, b \in P
  \qquad \text{ and } \qquad
  \varphi(c) \ge y \ \ \text{ for all } c \in C \, .
\]
The function $\Omega_{ P, \, C } (x, y)$ counts the number of order preserving $(x, y)$-maps.
The map $\varphi : P \longrightarrow \left[x\right]$ is a \Def{strictly order preserving $(x,y)$-map} if
\[
  a \prec b \ \Longrightarrow \ \varphi(a) < \varphi(b) \ \ \text{ for all } a, b \in P
  \qquad \text{ and } \qquad
  \varphi(c) > y \ \ \text{ for all } c \in C \, .
\]
The function $\Omega^{\circ}_{ P, \, C } (x, y)$ counts the number of strictly  order preserving $(x, y)$-maps.
The functions $\Omega^{\circ}_{ P, \, C } (x, y)$ and  $\Omega_{ P, \, C } (x, y)$ are called \Def{bivariate order polynomial} and \Def{weak bivariate order polynomial}, respectively.
They are indeed polynomials, which can be computed via certain descent statistics, and which
are related via the combinatorial reciprocity~\cite{Beckbop}
\begin{equation}\label{eq:reciprocityBOP}
  (-1)^{ |P| } \, \Omega^{\circ}_{ P, \, C } (-x,-y) \ = \ \Omega_{ P, \, C } (x,y+1) \, .
\end{equation}

As we mentioned in the introduction, bivariate order polynomials exhibit a connection to the theory of marked posets introduced in \cite{ardilabliemsalazar}. Briefly, one marks here the celeste elements, with a lower bound of $y$, and demands the lower bound 0 and the upper bound $x$ throughout the poset.

%---------------------------------------------------------------------------

\section{Deletion--Contraction}\label{sec:delcontr}
We start developing the properties of $\chi_G(x,y)$ by providing deletion--contraction
formulas.
For a mixed graph $G= \left(V,E,A\right)$, let $G-e$ denote edge deletion and $G/e$ denote edge contraction for an edge $e$ of $G$; let $v_e$ denote the vertex obtained after the contraction of  edge $e$. We use a similar terminology for deleting/contracting an arc. For an arc $a$ of mixed graph $G$, let $G_a \coloneqq \left(V,E, A-\left\{\overrightarrow{uv} \right\} \cup \left\{  \overrightarrow{vu} \right\}
\right)$, that is, $G_a$ is the graph obtained by reversing the direction of arc~$a$.
\begin{prop} \label{prop:delcontredgebcpmg}
If $G= \left(V,E,A\right)$ is a mixed graph and $e \in E$ is an edge, then 
\begin{equation}\label{eq:delcontredge}
\chi_G(x,y) \ = \ \chi_{G-e}(x,y) - \chi_{G/e}(x,y) + (x-y) \chi_{(G/e) -v_e}(x,y) \, .
\end{equation}
If $a = \overrightarrow{uv} \in A$ is an arc, then
\begin{equation}\label{eq:delcontrarc}
\begin{split}
\chi_G(x,y) + \chi_{G_a}(x,y) \ &= \ \chi_{G-a} (x,y) - \chi_{G/a} (x,y) + (x-y)(1-x+y)\chi_{(G/a) -v_a}(x,y)  \\
& \hspace{15pt} + (x-y) \left( \chi_{G-a-v}(x,y) + \chi_{G-a-u}(x,y) \right) . 
\end{split}
\end{equation}
\end{prop}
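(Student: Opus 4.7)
My plan is to prove both identities by direct enumeration, partitioning the colorings of the deletion graph according to the values of $c(u)$ and $c(v)$ relative to $y$. The key observation is an \emph{absorption identity}: for any vertex $w$ of any mixed graph $H$, whenever a coloring $c$ satisfies $c(w) > y$, every edge- and arc-condition of $H$ incident to $w$ is automatically satisfied. For outgoing arcs $\overrightarrow{wx}$ and for edges $wx$ this is immediate from \eqref{cond:bcpomg1} and \eqref{cond:bcpomg2}; for an incoming arc $\overrightarrow{xw}$ one checks that if $c(x) \ge c(w)$ then $c(x) > y$. Consequently the number of valid colorings of $H$ with $c(w) > y$ equals $(x-y)\,\chi_{H-w}(x,y)$. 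This identity is the main computational workhorse for both parts.

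For the edge formula, I would classify each coloring counted by $\chi_{G-e}(x,y)$ by the pair $(c(u),c(v))$: either $c(u) \ne c(v)$, or $c(u)=c(v) \le y$, or $c(u)=c(v) > y$. The colorings in the first and last classes are precisely those counted by $\chi_G(x,y)$, while those satisfying $c(u)=c(v)$ are in bijection with the colorings of $G/e$, of which there are $\chi_{G/e}(x,y)$. By the absorption identity applied at $v_e$ in $G/e$, the sub-count with $c(v_e) > y$ equals $(x-y)\,\chi_{(G/e)-v_e}(x,y)$. Solving for $\chi_G(x,y)$ yields \eqref{eq:delcontredge}.

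The arc case is more delicate because condition \eqref{cond:bcpomg2} is asymmetric in $u$ and $v$, so I would work with the symmetric combination $\chi_G + \chi_{G_a}$. For a fixed coloring $c$ of $G-a$, a short case analysis on whether $c(u), c(v) \le y$ or $> y$ and whether they are equal shows that the number of the two graphs $G, G_a$ in which $c$ is valid equals $2$ unless both $c(u), c(v) \le y$, in which case it equals $0$ if $c(u) = c(v)$ and $1$ otherwise. Summing over $c$ gives
\[
\chi_G(x,y) + \chi_{G_a}(x,y) \;=\; 2\chi_{G-a}(x,y) - T_1 - T_2,
\]
where $T_1$ counts colorings of $G-a$ with $c(u), c(v) \le y$ and $T_2$ counts those with $c(u) = c(v) \le y$. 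I would evaluate $T_2 = \chi_{G/a}(x,y) - (x-y)\,\chi_{(G/a)-v_a}(x,y)$ by splitting $\chi_{G/a}$ according to whether $c(v_a) \le y$ and applying the absorption identity at $v_a$. I would evaluate $T_1$ by inclusion--exclusion on the events $\{c(u) > y\}$ and $\{c(v) > y\}$, applying the absorption identity at $u$, at $v$, and at both (and using the graph identity $(G/a)-v_a = G-a-u-v$), to obtain
\[
T_1 \;=\; \chi_{G-a}(x,y) - (x-y)\,\chi_{G-a-u}(x,y) - (x-y)\,\chi_{G-a-v}(x,y) + (x-y)^2\,\chi_{(G/a)-v_a}(x,y).
\]
Substituting and collecting the two resulting coefficients of $\chi_{(G/a)-v_a}$ via $(x-y) - (x-y)^2 = (x-y)(1-x+y)$ gives \eqref{eq:delcontrarc}.

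The main obstacle is the arc case: one must first recognize that the asymmetry of \eqref{cond:bcpomg2} prevents a clean deletion--contraction identity for $\chi_G$ alone, and that the right object to study is the symmetric combination $\chi_G + \chi_{G_a}$, in which the ``$c(u) > y$'' clauses resymmetrize. After that the proof is bookkeeping, the delicate step being the inclusion--exclusion evaluation of $T_1$, where the identity $(G/a)-v_a = G-a-u-v$ allows the double-absorption term to combine with a piece of $T_2$ to produce the coefficient $(x-y)(1-x+y)$.
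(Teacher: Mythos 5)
Your proof is correct, and it differs from the paper's in organization rather than in substance, in a way worth noting. The paper proves only the arc identity (the edge identity is cited as equivalent to a known result), and does so by writing $\chi_G(x,y)+\chi_{G_a}(x,y)=\lvert C\cup C_a\rvert+\lvert C\cap C_a\rvert$ and evaluating the union and the intersection separately, the latter by a four-case analysis (with sub-cases) on the relative positions of $c(u)$, $c(v)$ and $y$. You instead sum, over valid colorings of $G-a$, the multiplicity (0, 1 or 2) with which each coloring is valid in $\{G,G_a\}$, reducing everything to the two sub-counts $T_1$ (both colors $\le y$) and $T_2$ (equal colors $\le y$); these you evaluate with an explicitly stated absorption lemma (colorings of $H$ with $c(w)>y$ are counted by $(x-y)\chi_{H-w}(x,y)$, valid because a color $>y$ at $w$ satisfies every incident edge and arc condition, including incoming arcs) together with a two-event inclusion--exclusion and the identification $(G/a)-v_a=G-a-u-v$. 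Your $\chi_{G-a}-T_2$ and $\chi_{G-a}-T_1$ are exactly the paper's $\lvert C\cup C_a\rvert$ and $\lvert C\cap C_a\rvert$, so the two arguments are reconcilable term by term; what your version buys is a cleaner bookkeeping (the absorption lemma replaces the paper's repeated ad hoc ``remove colorings with $c>y$'' steps, and the four-case intersection analysis collapses to one multiplicity table), plus a self-contained proof of the edge formula \eqref{eq:delcontredge} that the paper only references. Both routes correctly identify $\chi_G+\chi_{G_a}$ as the right symmetric quantity to handle the asymmetry of condition \eqref{cond:bcpomg2}.
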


We remark that \eqref{eq:delcontredge} is equivalent to
\cite[Proposition~1]{averbouchgodlinmakowskyelimination}.

% \begin{proof}[of~\eqref{eq:delcontredge}]
% As in the univariate case, $ \chi_{G-e} (x,y) - \chi_{G/e} (x,y)$ gives the number of proper colorings of the graph $G$ with $c(u) \neq c(v)$ for $e = uv$. On the other hand, there are $(x-y)\chi_{(G/e) -v_e}(x,y)$ ways of coloring $u$ and $v$ such that they have same color chosen from   $\left\{y+1,\ldots,x\right\}$.
% \end{proof} 

% The proof of~\eqref{eq:delcontrarc} is similar in spirit but has a longer case analysis.

\begin{proof}[of~\eqref{eq:delcontrarc}] 
Given $a =\overrightarrow{uv} \in A$, let $C$ be the set of all bivariate colorings of $G$ and $C_a$ the set of all bivariate colorings of $G_a$.
By inclusion--exclusion, 
\begin{align}
% \lvert C \rvert + \lvert C_a \rvert &= \lvert C \cup C_a \rvert  + \lvert C \cap C_a \rvert. \nonumber \\
% \text{This implies } \hspace{1cm} 
\chi_G(x,y) + \chi_{G_a}(x,y) & = \lvert C \cup C_a \rvert  + \lvert C \cap C_a \rvert. \label{eq:G+Ga}
\end{align}

For a coloring $c \in C \cup C_a$, we count the number of ways the following coloring conditions are satisfied: $c(u) < c(v)$ or $c(v) < c(u)$ or $c(u)> y$ or $c(v) > y$. This means, we have to count the number of ways of coloring vertices $u$ and $v$ such that they can have any color labels from the set $\left\{1,2,\ldots,x\right\}$ except that the vertices can not have equal colors with labels in the set $\left\{1,2,\ldots,y\right\}$. This is exactly counted by 
\begin{align} \chi_{G-a} (x,y) - \chi_{G/a} (x,y) + (x-y) \, \chi_{(G/a)-v_a} (x,y) \ = \
\lvert C \cup C_a \rvert \, . \label{eq:CunionCa}
\end{align}

\noindent For a coloring $c \in C \cap C_a$ we distinguish between the following
cases.. 
\begin{itemize}
\item[]Case 1: $c(u) < c(v)$ and $c(u) > c(v) $.
\newline There does not exist a feasible coloring in $C \cap C_a$ that satisfies these conditions simultaneously. 
\item[]Case 2: $ y <  c(v)$ with $ c(u) \leq c(v)$ and  $ y  < c(u)$ with $ c(v) \leq
c(u)$.
\newline This implies the coloring condition $y < c(u) = c(v)$, which is counted in
$(x-y) \, \chi_{(G/a)-v_a}(x,y) $ ways.

\item[]Case 3:  $c(u) < c(v)$  and $ y <  c(v)$ with $ c(u) \leq  c(v)$
\newline This implies that the coloring $c$ must satisfy  $ y <  c(v)$ with $ c(u) < c(v)$. There are two possibilities:
\begin{itemize}
\item[{$\bullet$}] $y <c(u) < c(v) \leq x$.

The colors for $u$ and $v$ can be chosen in ${x-y \choose 2}$ ways.  Thus the
number of possible colorings is ${x-y \choose 2}\chi_{(G/a) -v_a}(x,y)$.

\item[{$\bullet$}] $1 \leq c(u) \leq y < c(v) \leq x$.

There are $(x-y)$ ways to color $v$. To color $u$, the condition $1 \leq c(u) \leq y$
needs to be satisfied. This is equivalent to counting colorings where $c(u) \leq x$
and removing the possible colorings with $c(u) > y$, giving $(x-y) \left(  \chi_{G-a-u}(x,y) - (x-y) \chi_{(G/a) -v_a}(x,y)\right)$ colorings .
\end{itemize}
In total there are ${x-y \choose 2}\chi_{(G/a) -v_a}(x,y) + (x-y) \big(
\chi_{G-a-u}(x,y) - (x-y) \, \chi_{(G/a) -v_a}(x,y)\big)$ colorings. % of bivariate graph coloring  which satisfy this condition.

\item[]Case 4:   $c(v) < c(u)$  and $ y <  c(u)$ with $ c(v) \leq c(u)$
\newline This implies that the coloring $c$ must satisfy  $ y <  c(u)$ with $ c(v) < c(u)$. There are two possibilities:
\begin{itemize}
\item[{$\bullet$}] $y <c(v) < c(u) \leq x$.

The colors for $u$ and $v$ can be chosen in ${x-y \choose 2}$ ways.  Thus the possible colorings are counted by ${x-y \choose 2}\chi_{(G/a) -v_a}(x,y)$.

\item[{$\bullet$}] $1 \leq c(v) \leq y < c(u) \leq x$.

There are $(x-y)$ ways to color $u$. For coloring $v$, the condition $1 \leq c(v) \leq
y$ needs to be satisfied. This is equivalent to counting colorings where $c(v) \leq x$
and removing the possible colorings with $c(v) > y$, yielding $(x-y) \left(
\chi_{G-a-v}(x,y) - (x-y) \chi_{(G/a) -v_a} (x,y)\right)$ colorings.
\end{itemize}
In total there are \[{x-y \choose 2}\chi_{(G/a) -v_a}(x,y) + (x-y) \big(
\chi_{G-a-v}(x,y) - (x-y) \, \chi_{(G/a) -v_a}(x,y)\big)\] colorings. % of  bivariate graph coloring  which satisfy this condition.
\end{itemize}

Thus
\begin{align}
 \vert C \cap C_a \vert \ &= \ (x-y) \, \chi_{(G/a) -v_a}(x,y)+ 2  {x-y \choose 2}   \chi_{(G/a) - v_a} (x,y) \nonumber \\
&\qquad + (x-y) \left[  \chi_{G-a-v}(x,y) - (x-y) \chi_{G/a -v}(x,y)\right] \nonumber \\ &\qquad + (x-y) \left[  \chi_{G-a-u}(x,y) -(x-y) \chi_{G/a -u}(x,y)\right]. \label{eq:CintersectionCa}
\end{align}
%
%Therefore, $\vert C \cup C_a \vert = \chi_{G-a} (x,y)$ and $ \vert C \cap C_a \vert =  \chi_{G/a} (x,y) + 2 {x-y \choose 2} \chi_{(G/a) - v_a} (x,y) +(x-y) \big(  \chi_{G-a-u}(x,y) - (x-y) \chi_{G/a - v_a}(x,y)\big) + (x-y) \big(  \chi_{G-a-v}(x,y) - (x-y) \chi_{G/a - v_a}(x,y)\big)$

From Equations \eqref{eq:G+Ga},  \eqref{eq:CunionCa} and  \eqref{eq:CintersectionCa}
we finally obtain
\begin{align}
\chi_G(x,y) + \chi_{G_a}(x,y) \ 
% &= \ \chi_{G-a} (x,y) - \chi_{G/a} (x,y) + 2(x-y)\chi_{(G/a) -v_a}(x,y) \nonumber \\ 
% & \hspace{15pt} + 2  {x-y \choose 2}   \chi_{(G/a) - v_a} (x,y) \nonumber \\
% &\hspace{15pt} + (x-y) \left[  \chi_{G-a-v}(x,y) - (x-y) \chi_{(G/a) -v_a}(x,y)\right] \nonumber \\ & \hspace{15pt}+ (x-y) \left[  \chi_{G-a-u}(x,y) -(x-y) \chi_{(G/a) -v_a}(x,y)\right] \nonumber \\
%\text{On simplifying, we get}\\
& = \ \chi_{G-a} (x,y) - \chi_{G/a} (x,y) + (x-y)(1-x+y)\chi_{(G/a) -v_a}(x,y) \nonumber \\
& \hspace{15pt} + (x-y) \left[  \chi_{G-a-v}(x,y) +  \chi_{G-a-u}(x,y) \right].
\nonumber \qedhere
\end{align}
\end{proof}
%

%---------------------------------------------------------------------------

\section{Decomposition into Order Polynomials}\label{sec:decomp}

 For a mixed graph $G= \left(V,E,A\right)$, we recall that a \Def{flat} of $G$ is a
mixed graph $H$ that can be constructed from $G$ by a series of contractions of edges
and arcs. We denote the sets of vertices, edges and arcs of the flat $H$ by $V(H), E(H)$ and
$ A(H)$, respectively. The subset of vertices of $H$ that results from contractions of
$G$ is denoted by $C(H)$.  An example is depicted in Figure~\ref{fig:3graphs}, where we
obtain the flat $H$ by contracting the edge $v_1v_4$. For this flat, the set of
contracted vertices is $C(H) = \{v_1v_4\}$.

For a mixed graph $G$, let $G^{u}$ denote the underlying undirected graph, that is, the graph obtained from $G$ by replacing its arcs with undirected edges. 
% Figure ~\ref{fig:fig4c} is an underlying undirected graph of mixed graph in figure~\ref{fig:fig4a}. 
For some acyclic orientation $\sigma$ of $G^{u}$, let $T(\sigma)$ be the set of all tail vertices of arcs $a$ of a flat $H$ of $G$ for which the orientation of an edge in $\sigma$ is opposite to the direction of~$a$.

\begin{figure}[ht]
    \centering
    \subfigure[A mixed graph $G$.]{\includegraphics[scale=.8]{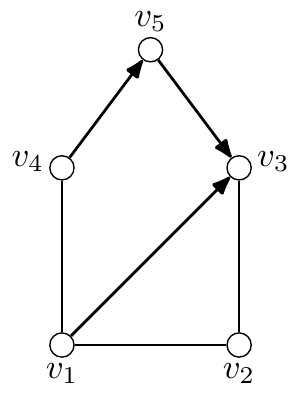}%
\label{fig:fig4a}}
    \qquad
    \subfigure[The flat $H$ obtained by contracting the edge $v_1v_4$.]{\includegraphics[scale=.8]{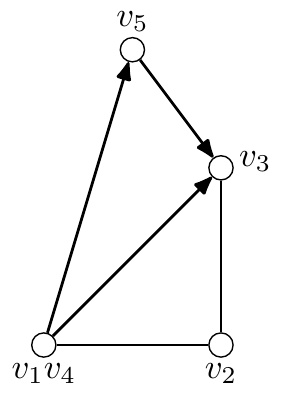}%
\label{fig:fig4b}}
    \qquad
    \subfigure[The underlying undirected graph.]{\includegraphics[scale=.8]{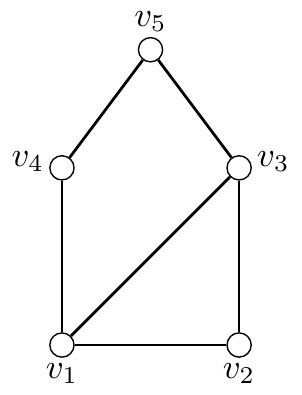}%
\label{fig:fig4c}}
    \caption{A mixed graph, one of its flat and the associated undirected graph.}\label{fig:3graphs}%
\end{figure}

\begin{thm} \label{mainthm2}
For a mixed graph $G$, 
\[
\chi_G(x,y) \ = \sum_{H \text{ \rmfamily flat of } G}
 \sum_{ \substack{ \sigma \text{ \rmfamily acyclic} \\ \text{\rmfamily orientation of } H^{u} } }\!\!\!\!
\Omega^{\circ}_{\sigma, \, C(H) \cup T(\sigma)} (x,y) \, . 
\]
\end{thm}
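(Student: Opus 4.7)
The plan is to exhibit an explicit bijection between bivariate colorings $c \colon V \to [x]$ of $G$ and triples $(H, \sigma, \varphi)$, where $H$ is a flat of $G$, $\sigma$ is an acyclic orientation of $H^u$, and $\varphi$ is a strict bicolored order-preserving $(x,y)$-map of $\sigma$ with celeste set $C(H) \cup T(\sigma)$. Summing the contributions $\Omega^{\circ}_{\sigma,\, C(H) \cup T(\sigma)}(x,y)$ over all pairs $(H, \sigma)$ then yields the claimed decomposition, and polynomiality of $\chi_G(x,y)$ drops out as a corollary.

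For the forward direction, given a bivariate coloring $c$, build a flat $H$ by contracting every edge $uv \in E$ and every arc $\overrightarrow{uv} \in A$ whose endpoints satisfy $c(u) = c(v)$; conditions~\eqref{cond:bcpomg1} and~\eqref{cond:bcpomg2} force the common color on each merged cluster to exceed $y$, so the induced map $\varphi$ on $V(H)$ satisfies $\varphi > y$ on $C(H)$. On the surviving edges and arcs of $H$, orient each one in $H^u$ from the endpoint of smaller color to the endpoint of larger color; this produces an acyclic orientation $\sigma$ of $H^u$ along which $\varphi$ is strictly order preserving. If some surviving arc $\overrightarrow{uv} \in A(H)$ is oriented opposite to its direction in $\sigma$ (equivalently $c(u) > c(v)$), then~\eqref{cond:bcpomg2} forces $c(u) > y$, which is exactly the celeste condition at $u \in T(\sigma)$.

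For the reverse direction, given $(H, \sigma, \varphi)$ with $\varphi \in \Omega^{\circ}_{\sigma,\, C(H) \cup T(\sigma)}$, recover $c$ by assigning to each original vertex of $G$ the $\varphi$-value of the cluster it belongs to. For an edge $uv \in E$: either $u, v$ share a cluster (so $c(u) = c(v) > y$) or they remain distinct in $H$ (so the $\sigma$-orientation forces $c(u) \neq c(v)$); either way~\eqref{cond:bcpomg1} holds. For an arc $\overrightarrow{uv} \in A$: if contracted, again $c(u) = c(v) > y$; if $\sigma$ agrees with its direction, $c(u) < c(v)$; if $\sigma$ reverses it, then $u \in T(\sigma)$ forces $c(u) > y$. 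In every case~\eqref{cond:bcpomg2} is satisfied. Since the two constructions are mutually inverse, summing over triples gives the formula.

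The main obstacle is conceptual rather than computational: one must justify that $C(H) \cup T(\sigma)$ is exactly the correct celeste set, i.e.\ that contracted clusters and reversed-arc tails are precisely the vertices forced into the range $> y$ by~\eqref{cond:bcpomg1} and~\eqref{cond:bcpomg2}, with no further constraints propagating. A minor bookkeeping subtlety arises when $H$ has several parallel edges or arcs between the same pair of clusters (for instance, coming from both an edge of $E$ and an arc of $A$ in $G$); but since $\sigma$ is determined by the single strict inequality between the two cluster colors, all such parallel elements are oriented consistently, so the map $c \mapsto (H, \sigma, \varphi)$ is well defined.
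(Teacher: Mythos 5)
Your proposal is correct and follows essentially the same route as the paper's own proof: contract the monochromatic (necessarily $>y$) edges and arcs to get the flat $H$, orient $H^u$ along the color gradient, and take $C(H)\cup T(\sigma)$ as the celeste set, with the reverse construction recovering the coloring from a strict $(x,y)$-map. The only difference is presentational -- you phrase it as a single bijection on triples $(H,\sigma,\varphi)$, while the paper spells out the forward map, the extension back to a coloring, and the injectivity argument separately.
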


Note that this implies that $\chi_G(x,y)$ is a polynomial (because $\Omega^{\circ}_{\sigma,
\, C(H) \cup T(\sigma)} (x,y)$ is).

\begin{proof}
Let $c: V \longrightarrow [x]$ be a coloring of the mixed graph $G$ that satisfies the coloring conditions~\eqref{cond:bcpomg1} and~\eqref{cond:bcpomg2}.
Note that the colors of the end-points of edges and arcs can be equal only if they are $>y$. 
Let $H$ be a flat of $G$ obtained by contracting all edges and arcs whose end-points have the same color. Thus the vertices in $C(H)$ have color labels $>y$.

Consider $H^{u}$, the underlying undirected graph of the flat $H$. We orient the edges of $H^{u}$ along the color gradient, that is, for the edge $uv$, we introduce the orientation $u \longrightarrow v$ if and only if $c(u) < c(v)$. Let $\sigma$ be such an orientation. 
No two vertices in $H^{u}$ that are connected by an edge have identical color labels. This gives us that $\sigma$ is acyclic. 
Let % $T(\sigma)$ be the set of vertices defined as $
\[
T(\sigma) \ \coloneqq \ \left\{ v \in V(H) : \, \overrightarrow{vw} \in A(H) \ \text{ and }
\ v \longleftarrow w \ \text{ in } \ \sigma \right\} .
\]
As the color gradient is decreasing along the arcs with tail vertices in the set $T(\sigma)$, we have $c(u) >y$ for each $u \in T(\sigma)$ from the coloring constraints. 
Now we regard the acyclic orientation $\sigma$ as a binary relation on the set $V(H^{u})$ defined by $u \preceq v$ if $u \longrightarrow v$. This gives us a bicolored poset $P$ where the vertices in the set $C(H) \cup T(\sigma)$ are celeste elements. The coloring $c$ is an order preserving $(x,y)$\textendash map on $P$. The bivariate order polynomial $\Omega^{\circ}_{\sigma, \, C(H) \cup T(\sigma)} (x,y)$ counts all such order preserving maps. 

Conversely, given a flat $H$ of $G$ and an acyclic orientation $\sigma$ of $H^{u}$,
an order preserving $(x,y)$\textendash map counted by $\Omega^{\circ}_{\sigma, \, C(H) \cup T(\sigma)} (x,y)$ can be extended to a coloring of $G$ as follows. 
All the vertices of $H$ get colors such that the color gradient follows $\sigma$. 
The celeste elements of the bicolored poset induced by the orientation $\sigma$ is given by the set $C(H) \cup T(\sigma)$. Hence the vertices in the set $T(\sigma)$ get colors $>y$. The coloring is then extended to the graph $G$ such that the vertices of the graph $G$ that result in contractions to form the flat $H$ get equal colors $>y$. 
This gives a coloring of the mixed graph $G$.

Consider two distinct colorings $c_1$ and $c_2$ of $G$. We need to show that the corresponding order preserving maps $\phi_1$ and $\phi_2$ are distinct. 

Construct the flats $H_1$ and $H_2$ of the graph by contracting those edges and arcs that
have end-vertices with equal color labels with respect to the colorings $c_1$ and $c_2$ respectively.
If $H_1 \neq H_2$, then the posets on the vertices of the underlying undirected graphs
$H_1^{u}$ and $H_2^{u}$ will be different for each coloring. This will give us distinct
order preserving  $(x,y)$-maps.

Suppose $H_1 = H_2$, that is, both flats are identical, then the underlying undirected graphs $H_1^{u}$ and $H_2^{u}$ will also be identical. Let $\sigma_1$ and $\sigma_2$ be acyclic orientations of $H_1^{u}$ and $H_2^{u}$, respectively. 

Now define $T_i (\sigma_i) \coloneqq \left\{ v \in V(H_i) \mid \overrightarrow{vw} \in
A(H_i) \text{ and } v \longleftarrow w  \text{ in } \sigma_i \text{ of } H_i^{u}  \right\}$
for $i=1,2$. If these sets are distinct, then the celeste elements in the corresponding
bicolored posets will be distinct, resulting in different vertex orderings which will give distinct order preserving $(x,y)$\textendash maps for corresponding colorings.

If for the vertex sets, $T_1(\sigma_1) = T_2(\sigma_2)$ but the acyclic orientations $\sigma_1$ and $\sigma_2$ are distinct, then the posets induced by these acyclic orientations will be distinct resulting in distinct order preserving $(x,y)$\textendash maps for corresponding  graph colorings.

If the flats, the acyclic orientation and the celeste sets are identical, then the
bicolored posets corresponding to both colorings are the same. The bivariate order
polynomial  $\Omega^{\circ}_{\sigma, \, C(H) \cup T(\sigma)} (x,y)$ counts all possible
order preserving $(x,y)$\textendash maps on this bicolored poset exactly once. 
\end{proof}
\begin{figure}
\centering
\includegraphics[width=1.05\textwidth]{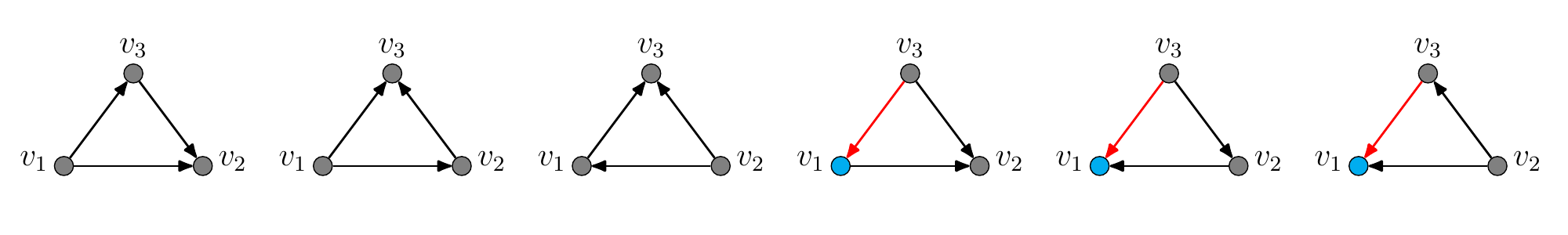}
\includegraphics[width=1.1\textwidth]{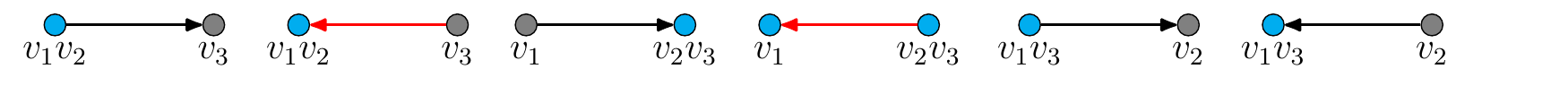}
\includegraphics[width=0.075\textwidth]{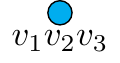}
\caption{Acyclic orientations of contractions of $G$. }
\label{fig:eg2}
\end{figure}

Naturally, an undirected graph is a special case of the above with $A = \emptyset$, and
Theorem~\ref{mainthm2} specializes to one of the results of~\cite{Beckbop}:

\begin{cor}\label{cor:undirpoly}
 For an undirected graph $G=\left(V,E\right)$,
\begin{align*}
\chi_G(x,y)  &=  \sum_{H \text{ \rmfamily flat of } G \hspace{3pt}} 
 \sum_{ \substack{ \sigma \text{ \rmfamily acyclic} \\ \text{\rmfamily
orientation of } H } }\!\!\!\! \Omega^{\circ}_{\sigma, \, C(H)} (x,y) \, .
\end{align*}
\end{cor}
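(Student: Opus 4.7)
The plan is to derive the corollary as an immediate specialization of Theorem~\ref{mainthm2} in the case where the arc set is empty. The goal is to trace through each piece of data appearing in the main theorem and verify that it collapses to the corresponding object for undirected graphs.

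First, I would observe that when $A = \emptyset$, every flat $H$ of $G$ also satisfies $A(H) = \emptyset$: flats arise from contractions, and contracting edges of a graph that has no arcs produces another graph with no arcs. As a consequence, $H^{u} = H$, so the inner sum over acyclic orientations of $H^{u}$ in Theorem~\ref{mainthm2} is the same as a sum over acyclic orientations of the flat $H$ itself, matching the corollary's inner sum.

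Next, I would unpack the set $T(\sigma)$ in this setting. Its definition singles out tail vertices $v$ of arcs $\overrightarrow{vw} \in A(H)$ whose underlying edge in $\sigma$ is oriented $v \longleftarrow w$. With $A(H) = \emptyset$, this condition is vacuous, so $T(\sigma) = \emptyset$ for every acyclic orientation $\sigma$. In particular, the celeste set of the bicolored poset attached to $(H, \sigma)$ reduces to $C(H) \cup T(\sigma) = C(H)$, making the integrand $\Omega^{\circ}_{\sigma, \, C(H) \cup T(\sigma)}(x,y)$ of Theorem~\ref{mainthm2} coincide with $\Omega^{\circ}_{\sigma, \, C(H)}(x,y)$.

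Substituting these observations into the formula of Theorem~\ref{mainthm2} yields exactly the stated identity. There is essentially no obstacle here; the entire proof amounts to recognizing that the mixed-graph data collapses onto the undirected-graph data when $A = \emptyset$, which recovers the result of~\cite{Beckbop}.
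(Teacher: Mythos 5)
Your proposal is correct and matches the paper's treatment: the corollary is obtained as the immediate specialization of Theorem~\ref{mainthm2} to $A = \emptyset$, noting that flats then have no arcs, $H^{u} = H$, and $T(\sigma) = \emptyset$, so the celeste set reduces to $C(H)$.
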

\begin{exam}\label{ex:k3}
For the mixed graph $G$ shown in Figure~\ref{fig:eg2fig1}, our proof of Theorem~\ref{mainthm2} is illustrated by Figure~\ref{fig:eg2}. 

For $H=G$, there are six acyclic orientations of $H^{u}$ as shown in Figure~\ref{fig:eg2}. %and we obtain,%\begin{equation}\sum_{ \substack{ \sigma \text{ \rmfamily acyclic} \\ \text{\rmfamily orientation of } H^{u} } }\!\!\!\! \Omega^{\circ}_{\sigma, \, C(H) \cup T(\sigma)} (x,y)= 3 {x \choose 3} + 2(x-y) {y \choose 2} + 3y {x-y \choose 2} + 3 {x-y \choose 3}\end{equation}

There are three flats obtained by contracting one edge or one arc in $G$. For each underlying undirected graph of a flat, there are two acyclic orientations each. Figure~\ref{fig:eg2} also shows these orientations. There is one flat obtained by contracting two edges or an edge and an arc of the graph resulting in a vertex $v_1v_2v_3$. %which gives $\Omega^{\circ}_{\sigma,\{v_1v_2v_3\}} = (x-y)$. 

Computing the bivariate order polynomial for each of these orientations yields
\begin{align*}
\chi_G(x,y) \ &= \ 3 {x \choose 3} + 2(x-y) {y \choose 2} + (3y+6) {x-y \choose 2} + 3 {x-y \choose 3} + (x-y)(3y+1) \\
&= \ x^3 - \frac{1}{2}xy^2 - \frac{5}{2}xy+y^2+y \, .
\end{align*}
\end{exam}
%
%------------------------------------------------------------------------

\section{Reciprocity} \label{sec:bcpmgReciprocity}

An orientation $\sigma$ and a coloring $c: V \longrightarrow [x]$ of the mixed graph $G$
satisfying~\eqref{cond:bcpomg1} and~\eqref{cond:bcpomg2}
\iffalse
\begin{itemize}
\item   for every undirected  edge $uv \in E$, $$c(u) \neq c(v) \text{ or  } c(u)>y ;$$
\item for every directed arc $\overrightarrow{uv} \in A$
 $$c(u) < c(v), \text{ or } c(u) > y,$$
\end{itemize}
\fi
%
 are \Def{compatible} if $c(u) \leq c(v)$ for any edge/arc directed from $u$ to $v$ in~$\sigma$.

We define $m_{H}(x,y)$ to be the number of compatible pairs $(\sigma,c)$ consisting of an
acyclic orientation $\sigma$ of $H^u$ and a coloring $c$ with $c(v) > y$ if $v \in C(H) \cup T(\sigma)$. 

\begin{thm}\label{thm:chirec}
For a mixed graph $G$, 
\begin{align*}
\chi_G(-x,-y) \ &= \sum_{H \text{ \rmfamily flat of } G \hspace{3pt}}  (-1)^{\vert V(H) \vert}
\, m_{H} (x,y) \, .
\end{align*}
\end{thm}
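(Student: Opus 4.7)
The plan is to deduce Theorem~\ref{thm:chirec} directly from the decomposition Theorem~\ref{mainthm2} by composing it with the bivariate order polynomial reciprocity~\eqref{eq:reciprocityBOP}. Since Theorem~\ref{mainthm2} is an identity of polynomials in $x$ and $y$ (both sides being polynomials that agree on positive integers), I may substitute $(x,y)\mapsto(-x,-y)$ termwise to obtain
\[
\chi_G(-x,-y) \ = \sum_{H \text{ flat of } G}\ \sum_{\sigma}\Omega^{\circ}_{\sigma,\,C(H)\cup T(\sigma)}(-x,-y),
\]
where $\sigma$ ranges over acyclic orientations of $H^u$.

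Next I apply~\eqref{eq:reciprocityBOP} to each summand. Since the poset associated with $\sigma$ has underlying set $V(H)$, the sign factor is $(-1)^{|V(H)|}$, which depends only on $H$ (not on $\sigma$) and hence can be pulled out of the inner sum:
\[
\chi_G(-x,-y) \ = \sum_{H \text{ flat of } G} (-1)^{|V(H)|}\sum_{\sigma}\Omega_{\sigma,\,C(H)\cup T(\sigma)}(x,y+1).
\]

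The final step is to recognize the inner sum as $m_H(x,y)$. Unwinding definitions, $\Omega_{\sigma,\,C(H)\cup T(\sigma)}(x,y+1)$ counts maps $c:V(H)\to[x]$ that are weakly order preserving along $\sigma$, that is, $c(u)\le c(v)$ whenever $u\to v$ in $\sigma$, and that satisfy $c(v)\ge y+1$, equivalently $c(v)>y$, for every $v\in C(H)\cup T(\sigma)$. These are precisely the compatible pairs $(\sigma,c)$ with this particular $\sigma$ enumerated by $m_H(x,y)$, so summing over all acyclic orientations $\sigma$ of $H^u$ yields $m_H(x,y)$ exactly once. Combining the three displays gives the desired identity.

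The main obstacle is a bookkeeping one: one must be careful that the shift $y\mapsto y+1$ in~\eqref{eq:reciprocityBOP} aligns correctly with the strict inequality $c(v)>y$ used in defining $m_H$, and that the sign factor $(-1)^{|P|}$ from the order polynomial reciprocity matches $(-1)^{|V(H)|}$ uniformly across all orientations of a fixed flat. Beyond these matching checks, no new combinatorial construction is required; the theorem is essentially an orchestrated application of Theorem~\ref{mainthm2} and~\eqref{eq:reciprocityBOP}.
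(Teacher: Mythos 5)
Your argument is correct and is essentially the paper's own proof: both substitute $(-x,-y)$ into the decomposition of Theorem~\ref{mainthm2}, apply the reciprocity~\eqref{eq:reciprocityBOP} termwise with sign $(-1)^{|V(H)|}$, and identify the resulting sum of $\Omega_{\sigma,\,C(H)\cup T(\sigma)}(x,y+1)$ over acyclic orientations $\sigma$ of $H^u$ with the count $m_H(x,y)$ of compatible pairs.
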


\begin{proof} By the reciprocity result of bivariate order polynomials~\eqref{eq:reciprocityBOP}, 
\begin{align*}
\chi_G(-x,-y) \ &= \sum_{H \text{ \rmfamily flat of } G \hspace{3pt}}  \sum_{ \substack{\sigma \text{ \rmfamily acyclic} \\ \text{ \rmfamily orientation of } H^u \\ 
 }}  \!\!\!\! 
 (-1)^{\vert V(H) \vert} \, \Omega_{\sigma, \, C(H) \cup T(\sigma)} (x,y+1)\\
&= \sum_{H \text{ \rmfamily flat of } G \hspace{3pt}}  (-1)^{\vert V(H) \vert} \, m_{H}
(x,y) \, .
\end{align*}
The last equation holds because $\Omega_{\sigma, \, C(H) \cup T(\sigma)} (x,y+1)$ counts the number of order preserving maps $\varphi : \sigma \longrightarrow [x]$ subject to the following conditions:
\begin{itemize}
\item[$\bullet$] for  $u \in C(H) \cup T(\sigma) $, we have $\varphi(u) \geq y+1$;
\item[$\bullet$] the map $\varphi$ is compatible with $\sigma$. \qedhere
\end{itemize}
\end{proof}

Once more, undirected graphs are mixed graphs with $A = \emptyset$, and so
Theorem~\ref{thm:chirec} specializes to one of the main results of~\cite{Beckbop}:
\begin{cor}
 For an undirected graph $G=\left(V,E\right)$, 
\[
\chi_G(-x,-y) \ = \sum_{H \text{ \rmfamily flat of } G} (-1)^{\vert V(H) \vert} \, m_H(x,y) \, ,
\]
where $m_H(x,y)$ is the number of pairs $(\sigma,c)$ consisting of an acylic orientation $\sigma$ of $H$ and a compatible coloring $c: V(H) \longrightarrow [x]$ such that $c(v) >y$ if $v \in C(H)$.
\end{cor}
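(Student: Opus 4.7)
The plan is to start from the decomposition in Theorem~\ref{mainthm2}, substitute $(-x,-y)$, and invoke the order-polynomial reciprocity \eqref{eq:reciprocityBOP} termwise. Concretely, evaluating Theorem~\ref{mainthm2} at $(-x,-y)$ gives
\[
\chi_G(-x,-y) \ = \sum_{H \text{ flat of } G} \sum_{\sigma} \Omega^{\circ}_{\sigma,\, C(H)\cup T(\sigma)}(-x,-y),
\]
where $\sigma$ ranges over the acyclic orientations of $H^u$. Since the underlying poset of each summand has exactly $|V(H)|$ elements, \eqref{eq:reciprocityBOP} rewrites the inner term as $(-1)^{|V(H)|}\,\Omega_{\sigma,\, C(H)\cup T(\sigma)}(x,y+1)$. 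The sign factor depends only on $H$, so I can pull it outside the inner sum and obtain the desired $(-1)^{|V(H)|}$ prefactor cleanly.

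The remaining step is the combinatorial identification
\[
\sum_{\sigma} \Omega_{\sigma,\, C(H)\cup T(\sigma)}(x,y+1) \ = \ m_H(x,y).
\]
Unpacking the definition of the weak bivariate order polynomial, the left-hand side counts pairs consisting of an acyclic orientation $\sigma$ of $H^u$ together with a map $\varphi\colon V(H)\to [x]$ that (i) is order preserving along $\sigma$, and (ii) satisfies $\varphi(v)\geq y+1$, equivalently $\varphi(v)>y$, for every $v\in C(H)\cup T(\sigma)$. Condition (i) is exactly compatibility of the coloring with $\sigma$ in the sense defined at the beginning of Section~\ref{sec:bcpmgReciprocity}, and condition (ii) is precisely the constraint built into $m_H(x,y)$. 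Thus the inner sum is $m_H(x,y)$, and assembling everything yields the theorem.

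The only real subtlety, and the step I would pause on, is the bookkeeping that turns the $y\mapsto y+1$ shift in \eqref{eq:reciprocityBOP} into the strict lower bound $c(v)>y$ appearing in $m_H(x,y)$, and the verification that compatibility of a coloring with an acyclic orientation is exactly order-preservation on the poset induced by $\sigma$. These are not hard, but they are where the proof could be misstated, so I would spell them out explicitly. Once they are in place, the argument is a two-line manipulation of Theorem~\ref{mainthm2} and \eqref{eq:reciprocityBOP}, with the definition of $m_H(x,y)$ serving precisely to package the resulting sum.
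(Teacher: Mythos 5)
Your argument is correct and is essentially the paper's own route: the paper obtains this corollary by setting $A=\emptyset$ in Theorem~\ref{thm:chirec}, whose proof is exactly your computation (apply \eqref{eq:reciprocityBOP} termwise to the decomposition of Theorem~\ref{mainthm2} and identify the inner sum of weak bivariate order polynomials with $m_H(x,y)$). The only point you leave implicit is the specialization itself, namely that $A=\emptyset$ forces $A(H)=\emptyset$, hence $T(\sigma)=\emptyset$ and $H^u=H$, so the celeste set reduces to $C(H)$ as in the statement of the corollary.
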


%-----------------------------------------------------------------------\

\acknowledgements
\label{sec:ack}
We are grateful to two anonymous referees for helpful comments. SK thanks Sophia Elia and Sophie Rehberg for encouraging discussions.

%\nocite{*}
\bibliographystyle{abbrvnat}
% use the following instead if you encounter problems 
%\bibliographystyle{alpha}
\bibliography{bib}
\label{sec:biblio}

\end{document}